\newcommand{\be}{\begin{equation}}
\newcommand{\ee}{\end{equation}}
\newcommand{\bea}{\begin{eqnarray}}
\newcommand{\eea}{\end{eqnarray}}
\newcommand{\bean}{\begin{eqnarray*}}
\newcommand{\eean}{\end{eqnarray*}}
\newcommand\blfootnote[1]{%
  \begingroup
  \renewcommand\thefootnote{}\footnote{#1}%
  \addtocounter{footnote}{-1}%
  \endgroup
}
\def\na{\nabla}
\def\div{\nabla\cdot}
\def\p{\partial}
\def\mn{|\!\!|}
\def\mn2{|\!\!|_{M^{d/2}}}
\newcommand{\R}{\mathbb R}
\renewcommand{\d}{\mathrm{d}}
\renewcommand{\div}{{\rm {div\ }}}
\def\<{\langle }
\renewcommand{\qed}{\qquad\kern1pt   
   \vbox{\hrule height 0.6pt      
         \hbox{\vrule width 0.6pt 
               \vbox{\vskip 6pt}  
               \hskip 3pt
              \vrule width 1.3pt} 
         \hrule depth 1.3pt}     
   \kern1pt}
\renewcommand{\div}{{\rm {div\ }}}
\newcommand{\eqnb}{\begin{equation}}
\newcommand{\eqne}{\end{equation}}
\newtheorem{theorem}{Theorem}
\newtheorem{proposition}[theorem]{Proposition}
\newtheorem{lemma}[theorem]{Lemma}
\newtheorem{example}[theorem]{Example}
\newtheorem{cor}[theorem]{Corollary}
\theoremstyle{definition}
\newtheorem{definition}[theorem]{Definition}
\def\qed{\hfill $\square$}
\newtheorem{remark}[theorem]{Remark}
\newcommand{\RR}{\mathbb{R}}
\newcommand{\T}{\mathbb{T}}
\begin{document}
\title{A partial uniqueness result and an asymptotically sharp nonuniqueness result for the Zhikov problem on the torus}
\author{
Tomasz Cie\'slak, Wojciech S. O\.za\'nski}
\date{\small\today}
\maketitle
\blfootnote{T.~Cie\'slak: Institute of Mathematics, Polish Academy of Sciences, Warsaw, 00-656, Poland; email: cieslak@impan.pl,\\
W.~S.~O\.za\'nski: Department of Mathematics, University of Southern California, Los Angeles, CA 90089, USA, and Institute of Mathematics, Polish Academy of Sciences, Warsaw, 00-656, Poland; email: ozanski@usc.edu}
\begin{abstract}
We consider the stationary diffusion equation $-\mathrm{div} (\nabla u + bu )=f$ in $d$-dimensional torus $\T^d$, where $f\in H^{-1}$ is a given forcing and $b\in L^p$ is a divergence-free drift. Zhikov \emph{(Funkts. Anal. Prilozhen., 2004)} considered this equation in the case of a bounded, Lipschitz domain $\Omega \subset \RR^d$, and proved existence of solutions for $b\in L^{2d/(d+2)}$, uniqueness for $b\in L^2$, and has provided a point-singularity counterexample that shows nonuniqueness for $b\in L^{3/2-}$ and $d=3,4,5$. We apply a duality method and a DiPerna-Lions-type estimate to show uniqueness of the solutions constructed by Zhikov for $b\in W^{1,1}$. We use a Nash iteration to demonstrate sharpness of this result, and also show that solutions in $H^1\cap L^{p/(p-1)}$ are flexible for $b\in L^p$, $p\in [1,2(d-1)/(d+1))$; namely we show that the set of $b\in L^p$ for which nonuniqueness in the class $H^1\cap L^{p/(p-1)}$ occurs is dense in the divergence-free subspace of $L^p$.
\end{abstract}
\section{Introduction}\label{first_introduction}
We consider $d$-dimensional periodic torus $\T^d=[-1/2,1/2]^d$, $d\geq 2$, and $b\in L^p (\T^d ;\RR^d)$ that is weakly divergence-free, that is
\begin{equation}\label{divergence}
\int_\Omega b\cdot \nabla \phi =0\;\;\mbox{for all}\;\; \phi \in C^\infty (\T^d ).
\end{equation}
We are concerned with the problem of Zhikov, that is with existence and uniqueness of solution $u\in \dot{H}^1 \coloneqq  \{ v\in H^1 (\T^d ) \colon \int v=0 \}$ to the stationary diffusion equation
\eqnb\label{zhikov_intro}
- \mathrm{div} (\nabla u + b u ) = f,
\eqne
where $f \in H^{-1} (\T^d )$ is given. Such a problem arises as a simplification of a homogenization problem, see \cite{FP}. We say that $u\in \dot{H}^1$ is a weak solution of \eqref{zhikov_intro} if 
\eqnb\label{zhikov_weak_intro}
\int ( -u\Delta \phi  + bu  \cdot \nabla \phi )  = (f,\phi )
\eqne
for all $\phi \in \dot C^\infty (\T^d )$. Here, and below, we use the notation $\int \equiv \int_{\T^d}$, $\| \cdot \|_p \coloneqq  \| \cdot \|_{L^p (\T^d )}$, we use the standard definitions of the Lebesgue spaces $L^p=L^p (\T^d )$ and the Sobolev spaces $W^{k,p}\equiv W^{k,p}(\T^d )$, $H^1 \equiv H^1(\T^d )$ and we use a dot ``$\cdot$'' to denote the subspace of functions with vanishing mean over $\T^d$, such as $\dot{L}^\infty$, for example.

Remarkably, despite the seemingly simple nature of \eqref{zhikov_intro}, it admits several surprising properties which were pointed out by Zhikov \cite{Zh} in the setting of a bounded, Lipschitz domain $\Omega$, and which we outline below. The purpose of this note is to present two results regarding uniqueness of solutions in the case of the torus $\T^d$: one of them gives uniqueness of a particular type of solutions (Theorem \ref{thm_main1}), and another one becomes sharp as $d\to \infty$ (Theorem \ref{thm_main2}).

In order to discuss the background of the problem, we note that so far it has been considered in the setting of a bounded, Lipschitz domain $\Omega \subset \RR^d$, in which case one uses analogous notions of the function spaces $L^p$, $W^{k,p}$, $H^1$, one considers the space $H^1_0 (\Omega )$ (the closure of $C_0^\infty (\Omega )$ with respect to the $H^1$ norm) instead of $\dot{H}^1$, one writes $\int\equiv \int_\Omega$ and one considers the weak formulation \eqref{zhikov_weak_intro} only for $\phi\in C_0^\infty$. Unless specified otherwise, the following results translate directly to the case of the torus $\T^d$.

First of all, if $b\in L^\infty$ then the bilinear form $\int bu \cdot \nabla \phi $ is bounded on $H^1_0$, and so existence and uniqueness of weak solutions  \eqref{zhikov_weak_intro} follow directly from the Lax-Migram lemma (see, for example, Theorem 6.2 in \cite{alt_book}). In that case, one observes two estimates that are independent of $b$: on the one hand taking $\phi \coloneqq  u$ gives the \emph{energy identity}
\eqnb\label{ee}
\int |\nabla u |^2 = (f,u).
\eqne
In fact, the energy identity is valid for any $u\in H^1_0$, $f\in H^{-1}$ and $b\in L^\infty$, since then $  \na |u|^2 \in L^1$, and so  approximating $u$ in $H^1$ by a $C^\infty_0$ functions allows one to take the limit in the divergence-free condition for $b$ to obtain \eqref{ee}.

On the other hand, if the forcing $f\in L^\infty$ (in which case we define $(f,u)\coloneqq \int fu$) then one has the \emph{maximum principle}
\eqnb\label{max_princ}
\| u \|_\infty \leq c_0 \| f \|_\infty,
\eqne
for some $c_0>0$ independent of $b$, which is stated in \cite[eq.~(1.5)]{Zh}; it can be verified using a Moser-type iteration. It is valid for all $u\in  H^1_0$, $b\in L^2$, $f\in L^\infty $ (and analogously in the periodic setting; \eqref{max_princ} is also valid for approximation solutions, see Definition~\ref{appr} below), which we prove in the Appendix for the sake of completeness. Some other questions related to regularity of solutions were also pursued in \cite{filonov, SSSZ}.

The problem is not so clear when $b\in L^p$ for $p < \infty$.

In fact, if $f\in L^\infty$ then one can easily construct a solution $u\in L^\infty$ to \eqref{zhikov_intro} for any $b\in L^1$ using the maximum principle \eqref{max_princ}. Indeed, it suffices  to approximate $b$ by a sequence of $b_n\in L^\infty$ such that $\| b_n -b \|_1 \to 0$, and extracting a weakly-$*$ convergent subsequence in $L^\infty$ of the corresponding (unique) solutions $u_n$ to obtain a desired solution $u$. Using the energy identity \eqref{ee} one can also make sure that the constructed $u$ belongs to $H^1_0$ with $\| u \|_{H^1} \leq c_0 |\Omega |  \| f \|_\infty$. Such approximation procedure gives rise to the definition of approximation solutions.
\begin{definition}\label{appr}
Given a $f\in H^{-1}$, $p\in [1,\infty ]$ and $b\in L^p $ that is weakly divergence-free \eqref{divergence}, we say that $u\in L^{p'}$ is an \emph{approximation solution} of the Zhikov problem \eqref{zhikov_intro} if it satisfies \eqref{zhikov_intro} in the sense of distributions and there exists a sequence $\{ b_n \} \subset  L^\infty$ such that $\| b_n - b\|_{p} \to 0$ and $u_n \rightharpoonup u$ weakly in $L^{p'}$ as $n\to \infty$, where $u_n$ denotes the unique solution of the Zhikov problem \eqref{zhikov_weak_intro} with $b$ replaced by $b_n$.
\end{definition}
Here and below $p'$ denotes the conjugate exponent to $p$ (i.e. $1/p'+1/p=1$). As mentioned above, if $f\in L^\infty$ then any approximation solution satisfies the maximum principle \eqref{max_princ}.

On the other hand, in the case when $f\not \in L^\infty$ then we have no reason to expect solutions in $L^\infty$, and so, in light of the embedding $H^1 \subset L^{2d/(d-2)}$, in order to make sense of the term $\int bu\cdot \nabla \phi$ in the weak formulation \eqref{zhikov_weak_intro} we assume that
\eqnb\label{b_approximate}
b\in L^\frac{2d}{d+2}.
\eqne
For such $b$ any approximation solution is a weak solution satisfying the energy inequality,
\eqnb\label{en_ineq}
\int |\nabla u |^2 \leq (f,u),
\eqne
which is a consequence of properties of weak limits. Given $b$ satisfying \eqref{b_approximate}, the question whether any weak solution satisfying \eqref{en_ineq} must be an approximation solution remains open and is related to the question of uniqueness of weak solutions. \\

As for uniqueness, Zhikov \cite{Zh} uses approximation by truncation to show uniqueness of weak solutions of \eqref{zhikov_intro}, in any dimension $d$, for
\eqnb\label{b_l2}
b\in L^2.
\eqne
Such uniqueness result is also true in the case of the torus in the class of functions $u\in \dot{H}^1$, which can also be proved by truncation and observing that \eqref{zhikov_weak_intro} is invariant with respect to adding constants to $u$.\\

In the case of $d=3$, he also provided an elegant example of nonuniqueness for $b\in L^{3/2-}$.
\begin{example}[Zhikov's \cite{Zh} counterexample]\label{ex_zhikov}
Let $\Omega \coloneqq  B(1)\subset \R^3$ and
\[
b(x)\coloneqq  \frac{x}{|x|^3} \beta \left( \frac{x}{|x|} \right),\qquad v(x)\coloneqq  (1-|x|^4 ) \alpha \left( \frac{x}{|x|}\right),
\]
where $\alpha, \beta \in C^\infty (S^2)$ are such that $\int _{S^2} \beta =0$, $\int_{S^2} \alpha \beta  =0 $ and $\int_{S^2} \alpha^2 \beta =-2$. Then $\div b=0$, $b\in L^{3/2-\epsilon}(\Omega )$ for every $\epsilon\in (0,1/2]$, $f\coloneqq  -\div (\nabla v + bv)\in H^{-1} (\Omega )$ and $v$ is a solution to the Zhikov problem that violates the energy inequality, that is $\int |\nabla v |^2 >(f,v)$. In particular, $v\ne u$ for any approximation solution $u$.
\end{example}
\begin{remark}
One can extend this example to dimension $4$ and $5$ by extending the three-dimensional ball into a cylinder. In dimension 6 and higher the existence condition \eqref{b_approximate} fails, and in dimension $2$ the example fails, as then $v\not \in H^1_0$.
\end{remark}

Example \ref{ex_zhikov} can be easily translated to the case of the torus $\T^d$, by cutting off $b$ outside of the singularity, and applying a Bogovski\u{\i} lemma (see \cite{bogovskii_79,bogovskii_80}, or Lemma III.3.1 in Galdi \cite{galdi_book} for details) to recover zero divergence, and then extending periodically.

The issue of uniqueness in the case $d=3$ for $b\in L^p$, $p\in (3/2,2)$, remains open. However, one can show uniqueness among approximation solutions in the case of $\Omega \coloneqq  B(1) \subset \RR^3$ for $b\in L^{3/2}$ that are smooth outside of the origin, see Lemma~2.2 in \cite{Zh}. Furthermore, as shown in Lemma~1.5 in
\cite{Zh}, one can use density of $L^\infty$ in $H^{-1}$ to obtain the following.
\begin{lemma}[Conditional uniqueness of approximation solutions]\label{zhikovs_lem1.5}
If $u=0$ is the only $\dot L^\infty $ solution of the Zhikov problem \eqref{zhikov_weak_intro} with $f=0$, then approximation solutions are unique in the class of all weak solutions.
\end{lemma}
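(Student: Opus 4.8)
The plan is to argue by duality and reduce the claim to the homogeneous hypothesis. Let $u$ be an approximation solution, with approximating drifts $b_n\to b$ in $L^p$ and $u_n\rightharpoonup u$ as in Definition~\ref{appr}, and let $w$ be any weak solution of \eqref{zhikov_weak_intro} with the same $f$ and $b$. Testing the equations for $u_n$ and for $w$ against a fixed $\phi\in\dot C^\infty$ and sending $n\to\infty$ (using $u_n\rightharpoonup u$ together with $b_nu_n\rightharpoonup bu$ in $L^1$, which holds as $b_n\to b$ strongly in $L^p$ and $u_n\rightharpoonup u$ weakly in $L^{p'}$) shows that $u$ is itself a weak solution, so the difference $z\coloneqq u-w\in\dot H^1$ solves the homogeneous problem $-\mathrm{div}(\nabla z+bz)=0$. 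Since $\dot L^\infty$ is dense in $\dot H^{-1}$ — this is the role of the density of $L^\infty$ in $H^{-1}$ — it suffices to prove $(g,z)=0$ for every $g\in\dot L^\infty$. When $f\in\dot L^\infty$ and $w$ happens to be bounded, $z\in\dot L^\infty$ and the hypothesis applies at once; the point is to reach arbitrary, possibly unbounded, $w$.

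Fix $g\in\dot L^\infty$ and consider the adjoint equation $-\mathrm{div}(\nabla\psi-b\psi)=g$, which, as $b$ is divergence free, is again a Zhikov problem, now with drift $-b$ and forcing $g\in L^\infty$. Using the same drifts $b_n$, let $\psi_n$ solve $-\mathrm{div}(\nabla\psi_n-b_n\psi_n)=g$; the maximum principle \eqref{max_princ} (for $-b_n$) gives the uniform bound $\|\psi_n\|_\infty\le c_0\|g\|_\infty$, and the energy identity \eqref{ee} a uniform $\dot H^1$ bound, so along a subsequence $\psi_n\rightharpoonup\psi$ weakly-$\ast$ in $L^\infty$ and weakly in $\dot H^1$, with $\psi\in\dot L^\infty\cap\dot H^1$ a bounded approximation solution of the adjoint problem. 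For the bounded drift $b_n$ the cross terms cancel, $\int b_n\cdot\nabla(u_n\psi_n)=0$, so testing the equation for $u_n$ against $\psi_n$ and that for $\psi_n$ against $u_n$ and subtracting gives the exact identity $\int u_ng=(f,\psi_n)$; letting $n\to\infty$ yields $\int ug=(f,\psi)$.

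It remains to establish the matching identity $\int wg=(f,\psi)$, and this is the heart of the matter. One cannot test $w$ against the limit $\psi$ directly, since $\psi\notin\dot C^\infty$ and $\int bw\cdot\nabla\psi$ need not converge absolutely for $b$ in the existence range \eqref{b_approximate}. Instead I would test the weak formulation of $w$ against each smooth approximant $\psi_n$, which is legitimate for fixed $n$ because $b_n\in L^\infty$ gives $\nabla\psi_n\in L^\infty$ by elliptic regularity, while $bw\in L^1$; subtracting the identity obtained by testing the equation for $\psi_n$ against $w$, and using $\int b_n\cdot\nabla(w\psi_n)=0$, the two bilinear terms combine and the entire discrepancy collapses to a single commutator, namely $\int wg=(f,\psi_n)-\int(b-b_n)w\cdot\nabla\psi_n$.

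The main obstacle is the vanishing of $\int(b-b_n)w\cdot\nabla\psi_n$ as $n\to\infty$: although $(b-b_n)w\to0$ in $L^1$, it is paired with $\nabla\psi_n$, which is controlled only in $L^2$, so the low integrability of $b$ obstructs the naive limit. This is exactly where the hypothesis must be used. Triviality of the bounded homogeneous kernel of the primal problem transfers, through a Fredholm-type argument at the level of the invertible bounded-drift approximations, to triviality of the corresponding kernel for the adjoint; this rules out loss of energy in the limit $\psi_n\rightharpoonup\psi$ and forces the commutator to vanish along the sequence. Granting this, $\int wg=(f,\psi)=\int ug$ for all $g\in\dot L^\infty$, whence $z=0$ and $u=w$. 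I expect the commutator estimate, together with the identification of the adjoint limit via the hypothesis, to be the genuinely delicate step, the reduction to the homogeneous problem and the bounded-drift duality being routine.
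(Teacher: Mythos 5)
The paper does not actually prove this lemma: it is quoted from Zhikov's paper \cite{Zh} (his Lemma~1.5), with the explicit indication that the proof ``uses density of $L^\infty$ in $H^{-1}$''. Measured against that intended route, your argument is both structurally different and, more importantly, incomplete at exactly the step you flag. The algebra up to the identities $(g,u_n)=(f,\psi_n)$ and $(g,w)=(f,\psi_n)-\int (b-b_n)w\cdot\nabla\psi_n$ is correct, but you offer no proof that the commutator term vanishes. What you control is $(b-b_n)w\to 0$ in $L^1$ and $\nabla\psi_n$ bounded in $L^2$ (via \eqref{ee}); the product $(b-b_n)w\cdot\nabla\psi_n$ is integrable for fixed $n$ only because $\nabla\psi_n\in L^\infty$ by elliptic regularity for the bounded drift $b_n$, and that bound degenerates as $\|b_n\|_\infty\to\infty$. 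The hypothesis of the lemma is purely qualitative (triviality of the bounded kernel of the \emph{primal} homogeneous problem) and supplies no compactness or higher integrability for $\nabla\psi_n$; even strong $H^1$ convergence of $\psi_n$ (``no loss of energy'') would not control a pairing of an $L^1$-small quantity against a merely $L^2$-bounded one. So the ``Fredholm-type argument'' is not a proof but a placeholder, and I do not see how to make it one. Two further warning signs: your scheme, if completed, would show that \emph{every} weak solution $w\in\dot H^1$ coincides with the approximation solution, which is stronger than the lemma and runs directly into the question the paper records as open (whether weak solutions satisfying the energy inequality must be approximation solutions); and your duality argument would in any case need the hypothesis for the \emph{adjoint} homogeneous problem (drift $-b$), which is not what is assumed.

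The route the paper points to is more elementary and uses the hypothesis directly rather than through an adjoint problem. For $f\in \dot L^\infty$ the maximum principle \eqref{max_princ}, whose constant is independent of $b_n$, shows that every approximation solution is itself bounded; hence the difference of two approximation solutions is an $\dot L^\infty$ weak solution of \eqref{zhikov_weak_intro} with $f=0$ and vanishes by hypothesis --- no adjoint problem and no commutator appear. For general $f\in H^{-1}$ one approximates $f$ by $f_k\in \dot L^\infty$ and uses the $b$-independent energy identity for the regularized problems, $\int|\nabla(u_n-u_n^{(k)})|^2=(f-f_k,u_n-u_n^{(k)})$, to obtain $\|u-u^{(k)}\|_{H^1}\leq C\|f-f_k\|_{H^{-1}}$ uniformly over approximating sequences; this is where the density of $L^\infty$ in $H^{-1}$ enters. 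In your write-up that density is invoked only to reduce to testing $z$ against bounded $g$, a step which needs no density at all since $z\in L^1$. I recommend rebuilding the proof along these lines; the duality machinery you set up is the natural tool for Theorem~\ref{thm_main1} (where the $W^{1,1}$ regularity of $b$ makes the commutator tractable), not for this lemma.
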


The first main result of this note shows that, in the case of the torus $\T^d$, the condition of the above lemma holds if $b\in W^{1,1}$.

\begin{theorem}[Partial uniqueness]\label{thm_main1}
Let $b\in W^{1,1}(\T^d  )$ be weakly divergence free \eqref{divergence}. If $u\in \dot{L}^\infty$ is such that $\int (\nabla u + bu ) \cdot \nabla \phi =0$ for all $\phi\in C^\infty (\T^d)$, then $u=0$.
\end{theorem}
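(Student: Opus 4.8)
The plan is to prove the \emph{energy identity} $\int|\nabla u|^2=0$ by a mollification argument whose heart is a DiPerna--Lions commutator estimate. To begin, I would recast the hypothesis in distributional form. Since $b\in W^{1,1}\subset L^1$ and $u\in\dot L^\infty$, the product $bu$ lies in $L^1$, the term $\int bu\cdot\nabla\phi$ makes sense for every $\phi\in C^\infty(\T^d)$, and the assumption $\int(\nabla u+bu)\cdot\nabla\phi=0$ is equivalent to
\[
-\Delta u=\mathrm{div}(bu)\qquad\text{in }\mathcal{D}'(\T^d).
\]

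Next I would mollify. Let $\rho_\varepsilon$ be a standard mollifier on $\T^d$ and set $u_\varepsilon:=u*\rho_\varepsilon$. Convolving the equation (which commutes with $\Delta$ and $\mathrm{div}$) yields the pointwise smooth identity $-\Delta u_\varepsilon=\mathrm{div}\big((bu)*\rho_\varepsilon\big)$. Subtracting $b\cdot\nabla u_\varepsilon$ defines the commutator
\[
R_\varepsilon(x):=\mathrm{div}\big((bu)*\rho_\varepsilon\big)(x)-b(x)\cdot\nabla u_\varepsilon(x)=\int \big(b(y)-b(x)\big)\,u(y)\cdot(\nabla\rho_\varepsilon)(x-y)\,\dy ,
\]
so that $u_\varepsilon$ satisfies $-\Delta u_\varepsilon-b\cdot\nabla u_\varepsilon=R_\varepsilon$. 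Testing this smooth equation against $u_\varepsilon$, the drift contribution vanishes by the weak divergence-free condition \eqref{divergence} applied to the smooth test function $u_\varepsilon^2$, since $\int(b\cdot\nabla u_\varepsilon)u_\varepsilon=\tfrac12\int b\cdot\nabla(u_\varepsilon^2)=0$. This leaves the clean identity
\[
\int|\nabla u_\varepsilon|^2=\int R_\varepsilon\,u_\varepsilon .
\]

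The crux, and what I expect to be the main obstacle, is the DiPerna--Lions commutator estimate: using $b\in W^{1,1}$ with $\mathrm{div}\,b=0$ and $u\in L^\infty$, one should obtain both the uniform bound $\|R_\varepsilon\|_1\lesssim\|u\|_\infty\|\nabla b\|_1$ and the strong convergence $R_\varepsilon\to u\,\mathrm{div}\,b=0$ in $L^1(\T^d)$ as $\varepsilon\to0$. The uniform bound follows from $|b(y)-b(x)|\le\int_0^1|\nabla b(x+t(y-x))|\,|y-x|\,\dt$ together with $|y-x|\,|(\nabla\rho_\varepsilon)(x-y)|\lesssim\varepsilon^{-d}\un_{|x-y|\le\varepsilon}$ and a change of variables. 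For the limit, I would first verify it for smooth $b$ via the scaling $y=x-\varepsilon z$, using $u(x-\varepsilon z)\to u(x)$ and the moment identity $\int z_j\partial_i\rho(z)\dz=-\delta_{ij}$, which produces the limit $u\,\mathrm{div}\,b$; the smoothness assumption is then removed by density of smooth fields in $W^{1,1}$ and the uniform bound. This is precisely the step where the $W^{1,1}$ regularity and the divergence-free structure of $b$ are both essential.

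Granting the commutator estimate, the conclusion is immediate. Since $\|u_\varepsilon\|_\infty\le\|u\|_\infty$, we have $\big|\int R_\varepsilon u_\varepsilon\big|\le\|u\|_\infty\|R_\varepsilon\|_1\to0$, hence $\int|\nabla u_\varepsilon|^2\to0$. Thus $\nabla u_\varepsilon\to0$ in $L^2(\T^d)$, in particular in $\mathcal{D}'(\T^d)$; on the other hand $u_\varepsilon\to u$ in $L^1$, so $\nabla u_\varepsilon\to\nabla u$ in $\mathcal{D}'$. By uniqueness of distributional limits $\nabla u=0$, so $u$ is constant, and since $u\in\dot L^\infty$ has zero mean we conclude $u=0$. (Alternatively, Calder\'on--Zygmund applied to $-\Delta u=\mathrm{div}(bu)$ gives $\nabla u\in L^{d/(d-1)}$, after which $\nabla u=0$ follows from Fatou's lemma along a subsequence with $\nabla u_\varepsilon\to\nabla u$ almost everywhere.)
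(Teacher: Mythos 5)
Your argument is correct, but it follows a genuinely different route from the paper. The paper proves the theorem by duality: for an arbitrary $f\in \dot L^\infty$ it constructs an approximation solution $v\in \dot H^1\cap L^\infty$ of the adjoint problem $-\div(\nabla v - bv)=f$, tests the equation for $u$ with the mollification $v_\varepsilon$, and passes to the limit using the same DiPerna--Lions commutator mechanism to conclude $\int fu=0$ for all $f$, whence $u=0$ upon taking $f=u$. You instead mollify the equation for $u$ itself and run a direct energy (renormalization) argument, obtaining $\int |\nabla u_\varepsilon|^2=\int R_\varepsilon u_\varepsilon\to 0$ and hence $\nabla u=0$ in $\mathcal D'$. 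Both proofs hinge on exactly the same commutator estimate ($R_\varepsilon=(b\cdot\nabla u)\ast\rho_\varepsilon-b\cdot\nabla u_\varepsilon\to u\,\div b=0$ in $L^1$, using $b\in W^{1,1}$, $u\in L^\infty$ and $\div b=0$), but your version is more self-contained: it dispenses with the existence theory and the $L^\infty$ bound (maximum principle, proved in the paper's Appendix) needed for the adjoint solution $v$, and it handles $u\in\dot L^\infty$ without any a priori $H^1$ regularity, since $u\in H^1$ (indeed $u\equiv 0$) comes out as a conclusion. What the duality formulation buys in exchange is a template that extends to comparing a general weak solution with an approximation solution for nonzero $f$; for the homogeneous statement at hand your route is leaner. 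One small caveat: your parenthetical alternative ending is not quite right --- Calder\'on--Zygmund theory applied to $-\Delta u=\div(bu)$ with $bu\in L^1$ gives $\nabla u\in L^{r}$ only for $r<d/(d-1)$, not $r=d/(d-1)$ --- but this is irrelevant, as your main conclusion via uniqueness of distributional limits is airtight.
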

In particular, by Lemma \ref{zhikovs_lem1.5}, approximation solutions are unique in the class of weak solutions for $b\in W^{1,1}$. We note that in the Zhikov counterexample (Example~\ref{ex_zhikov}) the solution $u$ is bounded, and the drift $b$ just fails to belong to $W^{1,1}$. However, $f \not \in L^\infty$, and so it is not clear whether any approximation solution is bounded. Thus it is not clear whether Theorem \ref{thm_main1} is sharp in the sense that it is not clear whether for $b\not \in W^{1,1}$ bounded solutions can be nonunique. 

On the other hand, Theorem~\ref{thm_main1} is sharp in the sense that the regularity assumption $u\in \dot L^\infty $ cannot be replaced by $\dot L^p$ for any $p<\infty$ if the dimension $d>p+1$, see Corollary~\ref{cor_main3} below.


 We prove Theorem~\ref{thm_main1} in Section~\ref{sec_UniS} below, using a duality method and a DiPerna-Lions-type commutator estimate \cite{DiPL}. In this proof the assumption $\div b =0$ is necessary.

Note that Theorem~\ref{thm_main1} still does not address the uniqueness problem for $b\in L^p$, $p\in (2d/(d+2),2)$ even among approximation solutions, which remains an open problem.

The second main result of this note shows that the problem admits a lot of flexibility for
\eqnb\label{p_for_nonuniqueness}
p<\frac{2(d-1)}{d+1} < \frac{2d}{d+2}.
\eqne
In fact, not only solutions are nonunique for some $b\in L^p$ for such $p$'s, but the set of such $b$'s is dense in $L^p$.
\begin{theorem}[Asymptotically sharp nonuniqueness]\label{thm_main2}
Let $d\geq 4$ and $p\in \left( 1,\frac{2(d-1)}{d+1} \right)$. Given $\epsilon>0$ and a divergence-free $b_0\in L^p $ there exists another divergence-free $b\in L^p $ such that $\| b-b_0 \|_p \leq \epsilon$ and \eqref{zhikov_weak_intro} with $f=0$ has a nontrivial weak solution $u\in H^1 \cap \dot L^{p'}$ with $\int u =0$.
\end{theorem}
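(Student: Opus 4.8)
The plan is to prove Theorem~\ref{thm_main2} by a convex integration (Nash iteration) scheme in the spirit of the Modena--Sz\'ekelyhidi nonuniqueness constructions for the continuity equation, adapted to the stationary diffusion operator. Rewriting the equation with $f=0$ and $\div b=0$ as $\Delta u + \div (bu)=0$, I would construct a sequence of smooth pairs $(u_q,b_q)$, with $b_q$ divergence free and $\int u_q=0$, solving the relaxed system
\[
-\Delta u_q - \div (b_q u_q) = \div R_q,
\]
for a vector error field $R_q$, together with inductive bounds: $\|R_q\|_1\to 0$ geometrically, the increments $\|b_{q+1}-b_q\|_p$, $\|u_{q+1}-u_q\|_{p'}$ and $\|\nabla (u_{q+1}-u_q)\|_2$ summable, and a lower bound on $\|u_q\|_2$ preventing the limit from being trivial. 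Passing to the limit gives $b\in L^p$ (with $\|b-b_0\|_p\le\epsilon$ by starting from a mollification $b_1$ of $b_0$ with small initial error) and a nontrivial $u\in H^1\cap\dot L^{p'}$ with $\int u=0$ solving \eqref{zhikov_weak_intro}; the pairing $bu\in L^1$ is then legitimate by H\"older since $u\in L^{p'}$.

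At each step the perturbations $u_{q+1}-u_q=\theta_q+\theta_q^c$ and $b_{q+1}-b_q=v_q+v_q^c$ are built from intermittent building blocks concentrated around one-dimensional lines (codimension $d-1$), oscillating at a large frequency $\lambda_q$ and concentrated at transversal scale $(\lambda_q\mu_q)^{-1}$, with amplitudes $a_\theta$ (scalar) and $a_v$ (divergence-free vector). Normalising in $L^2$ one has $\|\Theta\|_r\sim\mu^{(d-1)(1/2-1/r)}$ and $\|\nabla\Theta\|_r\sim\lambda\mu\,\|\Theta\|_r$, and analogously for $V$. The principal parts are chosen, via a geometric decomposition into finitely many directions, so that the low-frequency average $\langle v_q\theta_q\rangle$ cancels $R_q$, which forces $a_\theta a_v\sim\|R_q\|_1$; the correctors $\theta_q^c,v_q^c$ restore $\int u=0$ and $\div b_{q+1}=0$ and are lower order. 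The new error $R_{q+1}$ collects an oscillation error (the high-frequency part of $v_q\theta_q$, reduced by a factor $\lambda_q^{-1}$ through a concentration-adapted antidivergence), a transport/linear error from $\div(b_q\theta_q+v_qu_q)$, and---crucially---a diffusion error equal to $\nabla\theta_q$, arising from $\Delta\theta_q=\div\nabla\theta_q$, which cannot be improved by antidivergence.

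Setting $\beta\coloneqq (d-1)(1/p-1/2)>0$, the inductive requirements read (up to logarithms) $\|v_q\|_p\sim a_v\mu^{-\beta}$ small, $\|\theta_q\|_{p'}\sim a_\theta\mu^{\beta}$ bounded, $\|\nabla\theta_q\|_2\sim a_\theta\lambda\mu$ summable, together with $a_\theta a_v\sim\|R_q\|_1$ and the error bound $\|R_{q+1}\|_1\lesssim \|R_q\|_1/\lambda + a_\theta\lambda\mu^{(3-d)/2}+\dots$. The diffusion contribution $a_\theta\lambda\mu^{(3-d)/2}$ explains the hypothesis $d\ge4$, since it decays in $\mu$ only when $d\ge4$. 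The binding constraint is the simultaneous demand that $\lambda$ be large (to kill the oscillation error) while the $H^1$ bound $a_\theta\lambda\mu\ll1$ holds and $\|\theta_q\|_{p'}$ stays bounded: eliminating $a_\theta,a_v$ shows that, for a suitable choice $\lambda=\mu^{\beta-1}$, this is possible exactly when $\beta>1$, i.e. $1/p>1/2+1/(d-1)$, which is precisely the threshold $p<2(d-1)/(d+1)$.

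The main obstacle---and the genuinely new point compared with convex integration for transport or continuity equations---is the presence of the Laplacian: the density perturbation $\theta_q$ must be controlled in $H^1$ (so that $u\in H^1$ and $\Delta u$ is meaningful), yet its gradient contributes the irreducible diffusion error $\nabla\theta_q$ to the next step. Balancing this $H^1$ cost against the oscillation and transport errors, while keeping $\|b-b_0\|_p$ small, is what pins down the exponent and requires the careful use of two independent parameters $\lambda_q,\mu_q$ (oscillation and concentration) rather than a single frequency. The remaining ingredients---the geometric lemma producing the direction decomposition, the concentration-adapted antidivergence with its $\lambda^{-1}$ gain, the estimates on the transport and corrector errors, and the bookkeeping of the geometrically decreasing sequence $\|R_q\|_1$---are then standard, if technical.
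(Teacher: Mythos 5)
Your proposal follows essentially the same route as the paper: a Nash iteration with Mikado-type building blocks concentrated on lines (oscillation parameter $\lambda$, concentration parameter $\mu$), cancellation of the error field through the low-frequency average of the density--field product, an antidivergence operator gaining $\lambda^{-1}$, the irreducible diffusion error $\nabla\theta$, and the same threshold computation $(d-1)\left(\tfrac1p-\tfrac12\right)>1$ coming from the $H^1$ control of the density perturbation --- exactly the content of Proposition~\ref{prop_the_step} and Lemma~\ref{3.2}. The differences are cosmetic (you normalize the blocks in $L^2$ rather than in $L^{p'}$ and $L^p$, and you couple $\lambda$ to $\mu$ where the paper simply takes $\mu$ large after fixing $\lambda$), so the plan is correct and matches the paper's argument.
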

As before, we use the convention $\int\equiv \int_{\T^d}$, $L^p\equiv L^p (\T^d )$, $H^1 \equiv H^1 (\T^d)$. We note that the range of $p$'s for $d=2,3$ is empty.

Even though the range \eqref{p_for_nonuniqueness} excludes the approximation solutions of \eqref{zhikov_weak_intro} (recall \eqref{b_approximate}), solutions in the class $H^1 \cap L^{p'}$ exist for any $f\in L^\infty$, as pointed out below \eqref{max_princ}. Thus Theorem~\ref{thm_main2} is asymptotically sharp in the sense that it shows optimality of the $L^2$ regularity of the drift $b$ for uniqueness, at least for sufficiently high dimension $d$. We note that in the above theorem we consider $u\in H^1$, since this is the class in which the uniqueness holds (for $b\in L^2$, recall \eqref{b_l2}) and also the energy inequality \eqref{en_ineq} is concerned with such $u$. In particular, Theorem~\ref{thm_main2} demonstrates that if the regularity of the drift $b$ is very low, then there exist a lot of weak solutions violating the energy inequality. 

Moreover, in the case when the $H^1$ regularity of $u$ is not required we obtain the following.

\begin{cor}[Nonuniqueness of less regular solutions]\label{cor_main3} 
Let $d\geq 3$ and $p\in (1, d-1 )$ and let $r\in [1,p'(d-1)/(d-1+p'))$.
Given $\epsilon >0$ and a divergence-free $b_0 \in  L^{p}$ there exists $b\in L^{p}$ such that $\div b =0$ with $\| b-b_0 \|_{L^p}\leq \epsilon $ such that $\int (-u \Delta \phi + bu \cdot \nabla \phi )=0$ for all $\phi \in \dot C^\infty$, for some nontrivial solution $u\in \dot L^{p'} \cap W^{1,r}$.

Moreover if also $p> (d-1)/(d-2)$ and $q\in [1, p(d-1)/(d-1+p))$ then ``$L^p$'' above can be replaced by ``$L^p\cap W^{1,q}$''. 
\end{cor}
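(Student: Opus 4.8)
The plan is to derive Corollary~\ref{cor_main3} from the very construction that proves Theorem~\ref{thm_main2}, changing only the norms in which the constructed pair $(b,u)$ is controlled: instead of placing the density $u$ in $H^1=W^{1,2}$, I would track a $W^{1,r}$ bound with $r$ allowed below $2$, and, for the ``moreover'' part, additionally track a $W^{1,q}$ bound on $b$. The two ranges of $p$ are forced by a purely algebraic nonemptiness condition. Writing the claimed endpoints as $\frac1r>\frac1{p'}+\frac1{d-1}$ and $\frac1q>\frac1p+\frac1{d-1}$, one checks that the set of admissible $r$ meets $[1,\infty)$ exactly when $p<d-1$, since $\frac{p'(d-1)}{d-1+p'}>1 \Leftrightarrow p'(d-2)>d-1 \Leftrightarrow p<d-1$; likewise the admissible $q$ meet $[1,\infty)$ exactly when $\frac{p(d-1)}{d-1+p}>1 \Leftrightarrow p(d-2)>d-1 \Leftrightarrow p>(d-1)/(d-2)$. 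Moreover $r=2$ is admissible precisely when $p<2(d-1)/(d+1)$, so the corollary contains Theorem~\ref{thm_main2} as the borderline case and genuinely extends the range of $p$ once one gives up $H^1$ regularity of $u$.

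The exponents are best understood through the scaling of the building blocks. Heuristically, the perturbations added at each stage concentrate near one-dimensional sets, i.e.\ in codimension $d-1$; a profile behaving like $\mathrm{dist}^{\,a}$ transversally to such a set lies in $L^{p'}$ iff $a>-(d-1)/p'$ and has gradient in $L^r$ iff $a>1-(d-1)/r$, and eliminating $a$ at the integrability threshold returns exactly $\frac1r>\frac1{p'}+\frac1{d-1}$ (and the analogous relation $\frac1q>\frac1p+\frac1{d-1}$ for the drift). Concretely I would revisit the Nash iteration used to prove Theorem~\ref{thm_main2}, where the increments $u_{q+1}-u_q$ and $b_{q+1}-b_q$ are such concentrated, high-frequency building blocks, and re-estimate them in $W^{1,r}$, in $L^p$, and (for the second part) in $W^{1,q}$, rather than in $H^1$. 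The codimension-$(d-1)$ concentration produces the $\frac1{d-1}$ in the thresholds, and summability of $\sum_q\|u_{q+1}-u_q\|_{W^{1,r}}$, $\sum_q\|b_{q+1}-b_q\|_{L^p}$ and $\sum_q\|b_{q+1}-b_q\|_{W^{1,q}}$ holds precisely under the stated inequalities, giving $u_q\to u$ in $W^{1,r}\cap\dot L^{p'}$ and $b_q\to b$ in $L^p$ (resp.\ $L^p\cap W^{1,q}$), with the limit solving \eqref{zhikov_weak_intro} with $f=0$.

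The density statement, as well as the handling of the prescribed background field $b_0$, is inherited directly from the proof of Theorem~\ref{thm_main2}: the iteration is already run so that the limiting drift lies within $\epsilon$ of $b_0$ in $L^p$, and only the norm in which convergence of $u$ is measured has changed. For the ``moreover'' part I would additionally verify the smallness of the added perturbation in $W^{1,q}$; this follows from the same localization and scaling, since for a perturbation supported in a ball of radius $\rho$ one has $\|\nabla b\|_{L^q}\to 0$ as $\rho\to0$ whenever $q<d/2$, which is guaranteed by $q<\frac{p(d-1)}{d-1+p}<\frac{d-1}{2}$. Subtracting the mean of $u$ then places it in $\dot L^{p'}$ without affecting \eqref{zhikov_weak_intro}.

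The main obstacle I expect is the sharp bookkeeping at the critical exponents: one must confirm that the relevant norms of the concentrated building blocks, together with all the error and gluing terms of the iteration, scale with the codimension $d-1$ rather than $d$, and remain summable in the weaker norms $W^{1,r}$ and $W^{1,q}$, which may lie arbitrarily close to $W^{1,1}$. Since the $H^1$-based estimates of Theorem~\ref{thm_main2} must be systematically replaced by $L^r$- and $L^q$-type estimates, the delicate point is that lowering the integrability does not enlarge the iteration error enough to destroy convergence; verifying this, and that the limit $u$ is genuinely nontrivial rather than constant, is the crux.
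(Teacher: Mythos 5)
Your proposal matches the paper's own proof: the corollary is obtained by re-running the Nash iteration of Theorem~\ref{thm_main2} with the $H^1$ estimate on the density perturbation $\theta$ replaced by a $W^{1,r}$ estimate (and, for the second claim, an additional $W^{1,q}$ estimate on the drift perturbations $w,w_c$), the admissible ranges of $r$ and $q$ being exactly those for which the codimension-$(d-1)$ Mikado scaling yields a negative power of the concentration parameter $\mu$, and your exponent arithmetic (including the nonemptiness conditions $p<d-1$ and $p>(d-1)/(d-2)$ and the recovery of Theorem~\ref{thm_main2} at $r=2$) agrees with the paper. The only imprecision is the side remark attributing the smallness of $\| \nabla b\|_{L^q}$ to $q<d/2$ for a perturbation supported in a ball of radius $\rho$ --- the building blocks concentrate in tubes and the operative condition is the one you also state, $q<p(d-1)/(d-1+p)$ --- but this does not affect the argument.
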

Corollary~\ref{cor_main3} shows that the range of $p$ can be significantly expanded, if one does not require that $u\in H^1$. In particular, this shows that Zhikov's uniqueness result \eqref{b_l2} of weak solutions does not hold for less regular solutions, i.e. for $u\in \dot L^{p'} \cap W^{1,r}$. 

Moreover, for $p<2(d-1)/(d+1)$ one can take $r=2$ in Corollary~\ref{cor_main3} to recover Theorem~\ref{thm_main2}.

The last claim of the corollary shows that Theorem~\ref{thm_main1} is sharp for large dimension $d$. Indeed, given $p'<\infty$ taking any $d>p'+1$ ensures that the range of $q$ is nonempty. Then taking $q\coloneqq 1$ gives nonuniqueness of solutions $u\in \dot L^{p'}$ for $b\in W^{1,1}$. This shows that the uniqueness of solutions $u\in \dot L^\infty$, which is valid in all dimensions due to Theorem~\ref{thm_main1}, is sharp.\\

We prove Theorem~\ref{thm_main2} in Section~\ref{sec_Mikadoflows} below using a Nash iteration. The main idea of such iterations was first introduced in the groundbreaking work of Nash \cite{nash_54}, and it has been introduced to problems in partial differential equations by M\"{u}ller and \v{S}ver\'{a}k \cite{ms_03}, as well as De Lellis and Sz\'{e}kelyhidi Jr. \cite{DLS,DLS1} in the context of the Euler equations. The latter works inspired a number of groundbreaking developments using Nash iterations \cite{DanS,isett_onsager}, sometimes referred to as ``convex integration'', leading to the proof of the flexible side of the Onsager conjecture \cite{BDSV_19,isett_onsager}, as well as other remarkable results in different models \cite{bv_2019,cl_arxiv,MSa,MS}.

Our {proofs of Theorem~\ref{thm_main2} and Corollary~\ref{cor_main3} offer} an application of some of these developments in the Zhikov problem, and use Mikado flows, developed by \cite{DanS,MS}. They are  inspired by the approach of Modena and Sz\'{e}kelyhidi Jr. \cite{MS} in the context of the transport equation, but in our case the regularity of the constructed solution $u$ in Theorem \ref{thm_main2} is higher than the regularity of solutions to the transport equation obtained by Modena and Sz\'{e}kelyhidi Jr. \cite{MS}, the price we pay is the lower regularity of the constructed drift $b$. In fact, the roles of $u$ and $b$ in Theorem~\ref{thm_main2} can be thought of as opposite to the case of \cite{MS}. Moreover, our problem is time independent, which resembles a recent result \cite{luo} regarding existence of steady solutions to the $4$ dimensional Navier-Stokes equations, see also  \cite{bcv}, which exposes the flexibility related to the time dependence. 

\section{Proof of Theorem~\ref{thm_main1}}\label{sec_UniS}
Here we show that if $b\in W^{1,1}$ is divergence-free and $u\in \dot{H}^1 \cap L^\infty$ satisfies
\eqnb\label{eq_for_u_homo}
\int  (\nabla  u+  bu)\cdot \nabla \phi= 0
\eqne
for all $\phi \in C^\infty$, then $u=0$, which proves Theorem~\ref{thm_main1}.

Given $f\in \dot L^\infty (\T^d )$ let $v\in \dot H^1 (\T^d )\cap L^\infty (\T^d )$ be an approximation solution to \eqref{zhikov_intro} with $b$ replaced by $-b$, that is
\eqnb\label{eq_for_v}
\int \na v \cdot \na \phi  - \int bv\na \phi = \int f \phi\qquad \text{ for }\phi \in  C^\infty (\T^d).
\eqne
We denote by $\rho_\varepsilon (x) \coloneqq  \frac{1}{\varepsilon^n } \rho (x/\varepsilon )$ a standard mollifier, where $\rho \in C^\infty_0(B_1)$ is such that $\int_{B_1} \rho =1$. We set
\eqnb\label{mollification}
v_\varepsilon \coloneqq  v \ast \rho_\varepsilon,
\eqne
where ``$\ast$'' denotes the convolution. Using $v_\varepsilon $ as a test function in the equation for $u$ we get
\[\begin{split}
0&= \int (\nabla u + bu )\cdot \na v_\varepsilon  \\
&=  \int \na u_\varepsilon \cdot \na v - \int ((b u)\ast \na \rho_\varepsilon ) v\\
&= \int (b \cdot \na u_\varepsilon - (bu ) \ast \na \rho_\varepsilon )v + \int f u_\varepsilon \\
&= \int \int u (y) (b(x)-b(y)) \cdot \nabla \rho_\varepsilon (x-y)  v(x) \mathrm{d} y\,\mathrm{d} x +  \int f u_\varepsilon \\
&=- \int \int u (x-z\varepsilon ) \frac{b(x)-b(x-z\varepsilon )}{\varepsilon } \cdot \nabla \rho (z)  v(x) \mathrm{d} z\,\mathrm{d} x +  \int f u_\varepsilon \\
&\stackrel{\varepsilon \to 0}{\longrightarrow} - \int u (x) v(x) \p_i b (x )\mathrm{d} x \cdot \int  \nabla \rho (z)z_i \mathrm{d} z  +\int f u \\
&=\int f u ,
\end{split}
\]
where we used \eqref{eq_for_v} in the third line, the change of variable $y\mapsto (x-y)/\varepsilon =:z$ in the fifth line, and the fact that $\int z_j \p_i \rho (z) \d z = \delta_{ij}$, where $\delta_{ij}$ denotes the Kronecker delta, together with the divergence-free property of $b$ in the last line. In the 6th line we used the Dominated Convergence Theorem by observing that, since both $u$ and $v$ are essentially bounded, the integrand can be bounded by a constant multiple of $ \left| (b(x)-b(x-z\varepsilon ))/\varepsilon  \right|$, which in turn converges in $L^1 (\T^d\times \T^d)$ due to the fact that $b\in W^{1,1}$.

Taking $f= u$ gives $u=0$, as required.

\section{Proof of Theorem~\ref{thm_main2} and Corollary~\ref{cor_main3}}\label{sec_Mikadoflows}

Here we prove Theorem~\ref{thm_main2}, that is we show that for every $\epsilon>0$ and every divergence-free $b_0 \in C^\infty $ there exists a divergence-free $b\in L^p $ such that $\| b-b_0 \|_p \leq \epsilon$ and, for some nontrivial $u\in H^1 \cap L^{p'}$ with $\int u =0$,
\eqnb\label{zhikov_weak_repeat}
\int ( \nabla u + bu )\cdot \nabla \phi =0
\eqne
holds for all $\phi \in C^\infty $. Since $C^\infty$ is dense in $L^p$, this proves Theorem~\ref{thm_main2}.

The claim can be proved using the following.
\begin{proposition}\label{prop_the_step}
Let $M>0$ be the constant from Lemma~\ref{3.2} and $p\in (1, 2(d-1)/(d+1))$. Suppose that $(b_0 , u_0 , f_0 )\in C^\infty (\T^d ; \RR^{2d+1})$, with $\div b_0=0$, $\int u_0=0$ satisfies the equation
\[
-\div (\nabla u_0 +  b_0u_0 ) = \div f_0 \qquad \text{ in } \T^d.
\]
Given $\varepsilon >0$ there exists another triple $(b_1 , u_1 , f_1 )\in C^\infty (\T^d ; \RR^{2d+1})$, with $\div b_1=0$, $\int u_1=0$, satisfying the same equation and such that
\begin{eqnarray}
\| b_1 - b_0 \|_{p } + \| u_1 - u_0 \|_{{p'} } &\leq &M \max \{ \| f_0 \|_1^{ 1/p' }, \| f_0 \|_1^{1/p} \} \label{raz},\\
\| u_1-u_0 \|_{H^1} +  \| f_1 \|_{1} &\leq &\varepsilon \label{dwa}.
\end{eqnarray}
\end{proposition}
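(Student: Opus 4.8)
The plan is to obtain the new triple by a single convex-integration (Nash) step: I add highly concentrated Mikado corrections to $u_0$ and $b_0$ whose product cancels the old error $f_0$ to leading order. Writing $u_1=u_0+w$ and $b_1=b_0+v+v_c$, where $v_c$ is a small corrector restoring $\div b_1=0$, and subtracting the equation satisfied by $(b_0,u_0,f_0)$, one sees that $(b_1,u_1,f_1)$ solves the same equation as soon as
\[
\div f_1 = \div f_0 - \Delta w - \div\big( b_0 w + u_0 (v+v_c) + (v+v_c) w\big).
\]
The principal task is therefore to choose $w$ and $v$ so that the cell average of the bilinear term $vw$ equals $f_0$; then $\div f_0-\div(vw)$ collapses to the divergence of a mean-free, highly oscillatory field, and all the remaining contributions can be made small in $L^1$.

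For the building blocks I would use the Mikado flows of \cite{DanS,MS}: for each direction $\xi$ in the finite set furnished by the geometric Lemma~\ref{3.2}, a divergence-free pipe flow $W_{\xi}$ (perturbing the drift) together with a companion scalar density $\Theta_{\xi}$ (perturbing $u$), both periodic, mean-free, supported on tubes of transverse width $\mu^{-1}$ and oscillating at frequency $\lambda$, and normalized so that the cell average of $\Theta_{\xi}W_{\xi}$ equals $\xi$ while $\|W_{\xi}\|_p$, $\|\Theta_{\xi}\|_{p'}$ stay bounded independently of $\mu,\lambda$. Lemma~\ref{3.2} writes $f_0=\sum_k\Gamma_k\,\xi_k$ with finitely many fixed unit vectors $\xi_k$ and nonnegative smooth coefficients $\Gamma_k$ satisfying $\sum_k\Gamma_k\lesssim M|f_0|$. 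I then set
\[
v=\sum_k \Gamma_k^{1/p}\,W_{\xi_k},\qquad w=\sum_k \Gamma_k^{1/p'}\,\Theta_{\xi_k},
\]
so that, the pipes of distinct modes being disjoint, the cell average of $vw$ is $\sum_k\Gamma_k\xi_k=f_0$. The asymmetric split of the amplitude into the powers $1/p$ and $1/p'$ is exactly what produces \eqref{raz}: since $\|W_{\xi_k}\|_p,\|\Theta_{\xi_k}\|_{p'}=O(1)$ and $\sum_k\Gamma_k\lesssim M|f_0|$, one gets $\|v\|_p^p\lesssim M\|f_0\|_1$ and $\|w\|_{p'}^{p'}\lesssim M\|f_0\|_1$, i.e. $\|v\|_p\lesssim M^{1/p}\|f_0\|_1^{1/p}$ and $\|w\|_{p'}\lesssim M^{1/p'}\|f_0\|_1^{1/p'}$, the $\max$ absorbing the regimes $\|f_0\|_1\lessgtr1$.

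Next I would estimate $f_1$, choosing it as an antidivergence of the right-hand side above. It splits into three types of terms: (i) the linear contributions $-\nabla w$, $-b_0 w$ and $-u_0 v$, which are small in $L^1$ because concentration forces $\|w\|_1,\|v\|_1,\|\nabla w\|_1\to0$ as $\mu\to\infty$ (valid since $p<d-1$); (ii) the oscillatory remainder of $vw$: here I use that each product $\Theta_{\xi_k}W_{\xi_k}$ is itself divergence-free, so the only surviving term is $\nabla\Gamma_k$ hitting the mean-free part of $\Theta_{\xi_k}W_{\xi_k}$, which is high-frequency and mean-free, whence the inverse-divergence operator $\mathcal R$ gains a factor $\lambda^{-1}$ and makes it small; and (iii) the corrector $v_c$ and the subtraction of the (tiny) mean of $w$, which are lower order in $\lambda^{-1}$ and preserve $\div b_1=0$, $\int u_1=0$. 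Fixing first the geometric data from $f_0$, then taking $\mu$ large and finally $\lambda\gg\mu$, yields \eqref{dwa}.

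\emph{The main obstacle}, and the origin of the restriction $p<2(d-1)/(d+1)$, is the simultaneous requirement that $w=u_1-u_0$ be small in $H^1$ while its $L^{p'}$ size is only bounded in \eqref{raz}. Since $w$ is built from the density $\Theta_{\xi}$, whose concentration at transverse width $\mu^{-1}$ gives $\|\Theta_{\xi}\|_2\sim\mu^{(d-1)(1/2-1/p)}$ while each derivative costs a factor $\mu$, one finds $\|\nabla w\|_2\sim\mu^{\,1+(d-1)(1/2-1/p)}$, which tends to $0$ precisely when $1+(d-1)(1/2-1/p)<0$, that is when $p<2(d-1)/(d+1)$. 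Balancing this $H^1$ smallness against the $L^1$ smallness of $f_1$, with $\|v\|_p,\|w\|_{p'}$ merely bounded by the correct powers of $\|f_0\|_1$, is the crux; the rest is the routine bookkeeping of the Mikado estimates and of $\mathcal R$.
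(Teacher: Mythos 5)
Your proposal reproduces the architecture of the paper's proof: the same perturbation ansatz $u_1=u_0+\theta(+\theta_c)$, $b_1=b_0+w(+w_c)$ with Mikado density/field pairs carrying the asymmetric amplitudes $|f_j|^{1/p'}$ and $|f_j|^{1/p}$, the same cancellation of $f_0$ through the cell average of $\Theta^j_\mu W^j_\mu$, the same antidivergence corrector restoring $\div b_1=0$, and the same identification of the $H^1$ bound on the $u$-perturbation as the source of the restriction $p<2(d-1)/(d+1)$. Two points, however, need repair. First, your parameter hierarchy is inverted. The perturbation of $u$ is of the form $a(x)\,\Theta^j_\mu(\lambda x)$, so its gradient carries a factor $\lambda$ in front of $(\nabla\Theta^j_\mu)_\lambda$; the correct estimate is $\|\nabla\theta\|_2\lesssim\lambda\,\mu^{-\gamma}$ with $\gamma=(d-1)(1/p-1/2)-1>0$, not $\mu^{1+(d-1)(1/2-1/p)}$ alone. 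Consequently one must fix $\lambda$ large first (to win the $\lambda^{-1}$ gains from $\mathcal R$ and the Riemann--Lebesgue terms) and only then send $\mu\to\infty$ to kill $\lambda\mu^{-\gamma}$. Your prescription ``first $\mu$ large, then $\lambda\gg\mu$'' fails whenever $\gamma\le 1$, e.g.\ for $p$ close to $2(d-1)/(d+1)$, since then $\lambda\mu^{-\gamma}\ge\lambda^{1-\gamma}\to\infty$.

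Second, you assume that Lemma~\ref{3.2} furnishes a decomposition $f_0=\sum_k\Gamma_k\xi_k$ with \emph{smooth nonnegative} coefficients, so that $\Gamma_k^{1/p}$ and $\Gamma_k^{1/p'}$ are admissible smooth amplitudes. No such geometric lemma is available here (and the obvious choice $\Gamma_j=|f_j|$, $\xi_j=\mathrm{sgn}(f_j)e_j$ produces amplitudes $|f_j|^{1/p}$ that are not smooth, indeed not even $C^1$, across the zero set of $f_j$). The paper resolves this by inserting cutoffs $\chi_j$ supported where $|f_j|\gtrsim\delta$, which makes the amplitudes smooth at the cost of an extra error term $g_\chi=\sum_j(1-\chi_j)f_je_j$ with $\|g_\chi\|_1\le\delta/2$; this term cannot be made small by $\lambda$ or $\mu$ and must be budgeted into $\|f_1\|_1\le\varepsilon$ by choosing $\delta$ first. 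Your sketch omits this step entirely, and without it the construction of $\theta$ and $w$ as written is not well defined in $C^\infty$. Both issues are repairable and the remaining bookkeeping matches the paper's estimates \eqref{raz}--\eqref{dwa}.
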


\begin{proof}[Proof of Theorem \ref{thm_main2} using Proposition \ref{prop_the_step}.] Given $b_0$ and $\epsilon$ we can pick any nontrivial $u_0 \in C^\infty (\T^d)$ such that $\int u_0 =0$ and we set
\[ f_0 \coloneqq  - \nabla u_0 - b_0 u_0. \]
Since $f_0$ depends linearly on $u_0$, we can  assume (by multiplying $u_0$ by a small number) that
\begin{equation}\label{choice_of_u0}
\max \left\lbrace \| f_0 \|_1^{ 1/p' }, \| f_0 \|_1^{1/p} \right\rbrace \leq \frac{\epsilon }{2M}.
\end{equation}

For $k\geq 1$ we apply Proposition~\ref{prop_the_step} with
\eqnb\label{choice_vareps} \varepsilon \coloneqq  \frac12 \min \left\lbrace 1, \left( \frac{\epsilon }{M\,2^{k+1} } \right)^{p'} ,  2^{-k} \| u_0 \|_{H^1} \right\rbrace
\eqne
to obtain $(b_k,u_k,f_k)$ satisfying
\eqnb\label{eq_for_uk_bk}
-\div (\na u_k + b_ku_k ) =\div f_k.
\eqne
 Note that, due to \eqref{raz}, \eqref{dwa}, this implies that $(b_k)$ is Cauchy in $L^p$, and $(u_k)$ is Cauchy in $L^{p'}$ and also Cauchy in $H^1$. Thus there exist a divergence-free $b\in L^p$ such that $b_k \to b $ in $L^p$, as well as $u\in H^1 \cap L^{p'}$ such that $\int_{\T^d} u =0$ and $u_k \to u$ in $H^1$ and in $L^{p'}$. That $b$, $u$ satisfy \eqref{zhikov_weak_repeat} follows by taking the limit in the weak formulation of \eqref{eq_for_uk_bk}. Furthermore, due to our choice of $\varepsilon $ and \eqref{raz}, \eqref{dwa},
\[
\| u_k \|_{H^1} \geq \| u_0 \|_{H^1} - \sum_{l=1}^k \| u_l - u_{l-1} \|_{H^1} \geq \| u_0 \|_{H^1} \left( 1 - \sum_{l=1}^k 2^{-{(l+1)}}\right) \geq \frac{\| u_0 \|_{H^1}}2 \qquad \text{ for }k\geq 1,
\]
which shows that $u\ne 0$, and
\[
\| b - b_0 \|_p \leq \| b_1 - b_{0} \|_p +\sum_{k\geq 2} \| b_k - b_{k-1} \|_p \leq \frac{\epsilon}2 + \sum_{k\geq 2} M \| f_{k-1} \|_1^{1/p'}\leq \frac{\epsilon}2 + \sum_{k\geq 2} \frac{\epsilon }{2^{k}}  = \epsilon,
\]
where we used \eqref{choice_of_u0} to obtain the first $\epsilon/2$. Thus $\|b - b_0 \|_{p} \leq \epsilon$, as required.
\end{proof}

It remains to prove Proposition~\ref{prop_the_step}. To this end we recall the Mikado flows on $\T^d$.
\begin{lemma}[Mikado flows]\label{3.2}
There exists $M>0$ with the following property. Given $\mu > 2d $ and $j\in \{ 1,\ldots , d \}$ there exists a \emph{Mikado density} $\Theta_\mu^j: \T^d \rightarrow \R$ and a \emph{Mikado field} $W_\mu^j: \T^d \rightarrow \R^d$ such that
\eqnb \label{div_and_cancel_props}
\begin{split}
\div\, W_\mu^j =\div \left( \Theta_\mu^j W_\mu^j \right)&=0,\\
\int \Theta_\mu^j = \int W_\mu^j &=0 ,\\
\int \Theta_\mu^j W_\mu^j &= e_j
\end{split}
\eqne
for every $k\geq 0$ and
\eqnb\label{mikado_bds}
\begin{split}
\sum_{j=1}^d \| \na^k \Theta_\mu^j \|_r &\leq \frac{M}3 \mu^{k+(d-1)\left( \frac{1}{p'}-\frac{1}{r}  \right)} ,\\
\sum_{j=1}^d \| \na^k W_\mu^j \|_r &\leq  \frac{M}3 \mu^{k+(d-1)\left( \frac{1}{p} -\frac{1}{r}  \right)} ,\\
\sum_{j=1}^d \| \Theta_\mu^j  W_\mu^j \|_1 &\leq  M,\\
\sum_{j=1}^d \|  \Theta_\mu^j \|_{H^1} &\leq  {M}\mu^{-\gamma} ,
\end{split}
\eqne
where $\gamma \coloneqq   {(d-1)\left(\frac{1}{p} + \frac{1}{2} - (1+\frac{1}{d-1} )\right)}>0$, since $p\in\left( 1, \frac{2(d-1)}{d+1}\right) $.
\end{lemma}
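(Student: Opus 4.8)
The plan is to build $\Theta_\mu^j$ and $W_\mu^j$ as profiles concentrated, on the scale $1/\mu$, around the axis in the $e_j$ direction, and \emph{constant} in the $x_j$ variable, so that the divergence-free conditions hold for free. Writing $\bar x\in\T^{d-1}$ for the coordinates perpendicular to $e_j$, I would fix two profiles $g,h\in C_c^\infty(B_{1/2}^{d-1})$ and set
\[
\Theta_\mu^j(x):=a_\mu\,g(\mu\bar x),\qquad W_\mu^j(x):=b_\mu\,h(\mu\bar x)\,e_j,
\]
with amplitudes $a_\mu,b_\mu>0$ to be chosen. Since $\mu>2d>1$ and $g,h$ are supported in $B_{1/2}^{d-1}$, the rescaled profiles are supported in $\{|\bar x|<1/(2\mu)\}$, which lies strictly inside the fundamental domain, so these are genuine smooth functions on $\T^d$. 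Neither profile depends on $x_j$ and $W_\mu^j$ points in the $e_j$ direction, so $\div W_\mu^j=\partial_{x_j}(b_\mu h(\mu\bar x))=0$ and $\div(\Theta_\mu^j W_\mu^j)=\partial_{x_j}(a_\mu b_\mu (gh)(\mu\bar x))=0$ automatically; this is precisely why the fields are built in this form.

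\textbf{Normalization and amplitudes.} Choosing the profiles with $\int_{\R^{d-1}}g=\int_{\R^{d-1}}h=0$ (which forces $\int\Theta_\mu^j=\int W_\mu^j=0$) and normalizing $\int_{\R^{d-1}}gh=1$, the computation $\int\Theta_\mu^j W_\mu^j=a_\mu b_\mu\mu^{-(d-1)}(\int gh)\,e_j$ shows that the product-mean condition $\int\Theta_\mu^j W_\mu^j=e_j$ is equivalent to $a_\mu b_\mu=\mu^{d-1}$. Such profiles are elementary to produce (one may even take $g=h$ a normalized mean-zero bump). To fix the split I would require $\|\Theta_\mu^j\|_{p'}$ and $\|W_\mu^j\|_p$ to stay of order $1$; the scaling identity $\|\phi(\mu\cdot)\|_{L^r(\T^{d-1})}=\mu^{-(d-1)/r}\|\phi\|_r$ then forces
\[
a_\mu=\mu^{(d-1)/p'},\qquad b_\mu=\mu^{(d-1)/p},
\]
which indeed gives $a_\mu b_\mu=\mu^{(d-1)(1/p'+1/p)}=\mu^{d-1}$.

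\textbf{The estimates.} It then remains to read off the four bounds from $\|\nabla^k(\phi(\mu\cdot))\|_r=\mu^{\,k-(d-1)/r}\|\nabla^k\phi\|_r$ (norms over $\T^d$ equal those over $\T^{d-1}$, as the integrand is constant in $x_j$ over an interval of length $1$). This yields $\|\nabla^k\Theta_\mu^j\|_r=\mu^{\,k+(d-1)(1/p'-1/r)}\|\nabla^k g\|_r$ and the analogous bound for $W_\mu^j$, which are the first two inequalities; the product bound follows since $\|\Theta_\mu^j W_\mu^j\|_1=a_\mu b_\mu\mu^{-(d-1)}\|gh\|_1=\|gh\|_1$ is $\mu$-independent. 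For the last bound I would note that $\|\Theta_\mu^j\|_2=\mu^{(d-1)(1/2-1/p)}\|g\|_2$ and $\|\nabla\Theta_\mu^j\|_2=\mu^{\,1+(d-1)(1/2-1/p)}\|\nabla g\|_2$, and a direct identification gives the exponent $1+(d-1)(1/2-1/p)=-\gamma$, so that $\|\Theta_\mu^j\|_{H^1}\leq\mu^{-\gamma}(\|g\|_2+\|\nabla g\|_2)$. The constant $M$ is a fixed multiple (including the factor $d$ from summing over $j$) of the relevant Sobolev norms of the fixed profiles, independent of $\mu$ and $j$; for the derivative bounds at a given order $k$ it absorbs $\|\nabla^kg\|_r,\|\nabla^kh\|_r$, while the crucial product and $H^1$ bounds are genuinely $k$-independent, so one simply takes the maximum over the finitely many orders used.

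\textbf{Main point.} There is no serious analytic obstacle: the entire content is the bookkeeping of the concentration exponents and the choice of amplitudes making all four estimates hold simultaneously. The only delicate point — and the sole place the hypothesis on $p$ enters — is the $H^1$ bound: the decay $\mu^{-\gamma}$ requires $-\gamma<0$, i.e. $1+(d-1)(1/2-1/p)<0$, which is exactly the assumption $p<2(d-1)/(d+1)$ of the lemma (and is what makes $\gamma>0$).
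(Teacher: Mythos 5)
Your construction is correct for the lemma as literally stated, and it is essentially the standard one: the paper does not prove Lemma~\ref{3.2} itself but delegates it to Lemmas~2.1 and 2.6 of \cite{MS}, whose Mikado flows are built exactly as you describe (mean-zero profiles supported in a ball of $\R^{d-1}$, rescaled by $\mu$ and extended constantly along the $e_j$-axis, with amplitudes $\mu^{(d-1)/p'}$ and $\mu^{(d-1)/p}$ splitting the normalization $a_\mu b_\mu=\mu^{d-1}$). The exponent bookkeeping checks out, including the identification $1+(d-1)(1/2-1/p)=-\gamma$ and the equivalence of $\gamma>0$ with $p<2(d-1)/(d+1)$, and your remark that a single $M$ cannot be uniform over all $k$ (only over the finitely many derivative orders actually used, here $k=0,1$) is a fair reading of the statement.

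One substantive omission: you never use the hypothesis $\mu>2d$ (your argument only needs $\mu>1$ so that the rescaled support fits inside the fundamental domain), and this is the symptom of a missing property. As you build them, all $d$ tubes pass through the origin, so $\supp \Theta_\mu^j\cap\supp W_\mu^{j'}\neq\emptyset$ for $j\neq j'$. The lemma as stated does not demand disjointness, but the application in the proof of Proposition~\ref{prop_the_step} silently relies on it: the identity rewriting $\theta w$ as $\sum_j\chi_j^2 f_j\bigl(\Theta_\mu^jW_\mu^j\bigr)_\lambda$ discards all cross terms $\Theta_\mu^jW_\mu^{j'}$ with $j\neq j'$, which is legitimate only if the supports for distinct $j$ are pairwise disjoint. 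The fix is standard and costs nothing in your estimates: translate the $j$-th tube by a suitable offset $x^j\in\T^d$; the hypothesis $\mu>2d$ makes the tubes thin enough (radius below $1/(4d)$) that $d$ pairwise disjoint ones fit in $\T^d$. You should incorporate this translation and record the pairwise disjointness of the supports as part of the construction.
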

The Mikado flows have been first introduced by Daneri and Sz\'ekelyhidi Jr. \cite{DanS} in the context of the Onsager conjecture regarding the incompressible Euler equations. Here we use the Mikado flows that were applied by Modena and Sz\'ekelyhidi Jr. \cite{MS} in the context of the transport equation, see Lemma~2.1 and Lemma~2.6 in \cite{MS} for details.

Given $f: \T^d \to \R$ and $\lambda>0$, we denote by
\[
f_\lambda (x) \coloneqq  f(\lambda x)
\]
the ``$\lambda$-dilation of $f$''. We now recall some facts regarding functions with fast oscillations.
\begin{lemma}[Fast-oscillating functions]\label{lem_fast_osc}
Let $\lambda \in \mathbb{N}$.
\begin{enumerate}
\item[1.] (improved H\"older's inequality) For all $p\in [1,\infty ]$ there exists $C_p>0$ such that
\begin{equation}\label{improved_Holder}
\left| \| f g_\lambda \|_p - \| f \|_p \|g \|_p \right| \leq C_p \lambda^{-1/p} \| f \|_{C^1} \| g \|_p
\end{equation}
for all $f,g \in C^\infty $.
\item[2.] (quantitative Riemann-Lebesgue) If $\int g =0$ then
\begin{equation}\label{riem-leb_quantified}
\left|\int f g_\lambda  \right| \leq \sqrt{d} \lambda^{-1} \| f \|_{C^1} \| g \|_1.
\end{equation}
\item[3.] (the antidivergence operator $\mathcal{R}$) Given $f,g \in C^\infty $ with $\int fg_\lambda = \int g =0$ there exists a vector field $u=\mathcal{R} (fg_\lambda )$ such that $\div\, u=fg_\lambda $ and
\begin{equation}\label{antidiv_bound}
\| \na^k u \|_p \leq C_{k,p} \lambda^{k-1} \| f\|_{C^{k+1}} \| g \|_{W^{k,p}}
\end{equation}
for $k\geq 0$, $p\in [1,\infty ]$, where $C_{k,p}>0$.
\end{enumerate}
\end{lemma}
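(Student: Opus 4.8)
The plan is to prove the three parts separately. Parts 1 and 2 are elementary and rest on decomposing $\T^d$ into the $\lambda^d$ congruent subcubes $Q$ of side $1/\lambda$ (possible since $\lambda\in\mathbb N$), whereas part 3 is the genuine ``improved antidivergence'' and is where the work lies. For part 2, I would first record the exact cancellation $\int_Q g_\lambda=\lambda^{-d}\int g=0$, valid because $g(\lambda\,\cdot)$ runs through a full period on each $Q$ and $\int g=0$. Fixing a corner $x_Q$ of $Q$ and writing $\int_Q fg_\lambda=\int_Q(f-f(x_Q))g_\lambda$, one bounds $\|f-f(x_Q)\|_{L^\infty(Q)}\le\mathrm{diam}(Q)\,\|\nabla f\|_\infty=\sqrt d\,\lambda^{-1}\|\nabla f\|_\infty$, and summing over $Q$ together with $\int|g_\lambda|=\|g\|_1$ yields \eqref{riem-leb_quantified}.

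For part 1 I would use the same decomposition with the \emph{exact} identity $\int_Q|g_\lambda|^p=\lambda^{-d}\|g\|_p^p$ (change of variables $y=\lambda x$ and periodicity). Writing $\bar f$ for the step function equal to $f(x_Q)$ on each $Q$, the reverse triangle inequality in $L^p$ controls $\big|\,\|fg_\lambda\|_p-\|\bar f g_\lambda\|_p\,\big|\le\|(f-\bar f)g_\lambda\|_p\le\|f-\bar f\|_\infty\|g\|_p\lesssim\lambda^{-1}\|f\|_{C^1}\|g\|_p$, while the exact identity gives $\|\bar f g_\lambda\|_p^p=\|g\|_p^p\,\lambda^{-d}\sum_Q|f(x_Q)|^p$, whose bracket is a Riemann sum for $\int|f|^p$ with error $O(\lambda^{-1}\|f\|_{C^1}^p)$. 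Taking $p$-th roots via $|a^{1/p}-b^{1/p}|\le|a-b|^{1/p}$ converts this $\lambda^{-1}$ into the claimed $\lambda^{-1/p}$, which is the dominant error and produces \eqref{improved_Holder}.

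The core is part 3, and the idea (following the toolbox of \cite{MS}) is to build $\mathcal R$ so that it trades one power of the fast oscillation for the gain $\lambda^{-1}$. Let $\mathcal R_0:=\nabla\Delta^{-1}$ be the standard antidivergence on mean-zero scalars, so $\div\,\mathcal R_0 h=h$, and set $G:=\mathcal R_0 g$, a smooth mean-zero vector field with $\div G=g$. The crucial observation is that the kernel of $\mathcal R_0$ on $\T^d$ behaves like $|x|^{-(d-1)}$ near the origin and is therefore \emph{integrable}, so by Young's inequality $\mathcal R_0:L^p\to L^p$ is bounded for \emph{every} $p\in[1,\infty]$, endpoints included. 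Since $\lambda g_\lambda=\div(G_\lambda)$, the identity $fg_\lambda=\tfrac1\lambda\div(fG_\lambda)-\tfrac1\lambda\nabla f\cdot G_\lambda$ suggests
\[
\mathcal R(fg_\lambda):=\tfrac1\lambda f G_\lambda-\mathcal R_0\!\left(\tfrac1\lambda\,\nabla f\cdot G_\lambda\right),
\]
for which $\div\,\mathcal R(fg_\lambda)=fg_\lambda$ by construction. The hypothesis $\int fg_\lambda=0$ is exactly what makes $\nabla f\cdot G_\lambda$ mean-zero, since $\int\nabla f\cdot G_\lambda=-\int f\,\div G_\lambda=-\lambda\int fg_\lambda$ after integrating by parts, so $\mathcal R_0$ legitimately applies.

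For the estimate I would differentiate by Leibniz, using $\nabla^m(G_\lambda)=\lambda^m(\nabla^m G)_\lambda$ and $\|(\nabla^m G)_\lambda\|_p=\|\nabla^m G\|_p$. The dominant contribution to $\nabla^k(\tfrac1\lambda fG_\lambda)$ is $\lambda^{k-1}f(\nabla^k G)_\lambda$, so everything reduces to bounding $\|\nabla^m G\|_p$; here I would commute derivatives onto $g$, writing $\nabla^m G=\nabla\Delta^{-1}(\nabla^m g)$, so that only the integrable-kernel operator $\mathcal R_0$ ever acts and Young's inequality gives $\|\nabla^m G\|_p\lesssim\|\nabla^m g\|_p\le\|g\|_{W^{k,p}}$ for all $p\in[1,\infty]$. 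The $\mathcal R_0$-remainder is handled identically and is lower order in $\lambda$ thanks to its extra factor $\lambda^{-1}$, at the cost of one more derivative on $f$. Collecting terms gives $\|\nabla^k\mathcal R(fg_\lambda)\|_p\lesssim\lambda^{k-1}\|f\|_{C^{k+1}}\|g\|_{W^{k,p}}$, matching \eqref{antidiv_bound}. I expect the main obstacle to be precisely the endpoints $p=1,\infty$: the Calderón--Zygmund operator $\nabla^2\Delta^{-1}$ implicit in a naive antidivergence is unbounded there, and the entire point of first peeling off the divergence $\tfrac1\lambda fG_\lambda$ and then commuting derivatives so that only the order $-1$ operator $\mathcal R_0$ touches $g$ is to keep every estimate uniform up to the endpoints; matching the exact powers of $\lambda$ and the derivative counts is the careful bookkeeping that remains.
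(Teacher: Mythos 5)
Your argument is correct and is essentially the proof the paper relies on: the paper gives no proof of this lemma at all, deferring entirely to Lemmas~2.1--2.6 of Modena--Sz\'ekelyhidi \cite{MS}, and the arguments there are exactly your subcube decomposition with the exact periodicity identities for parts 1--2, and, for part 3, the single integration by parts against $G_\lambda$ with $G=\nabla\Delta^{-1}g$ combined with the endpoint $L^p$-boundedness ($p\in[1,\infty]$) of the integrable-kernel operator $\nabla\Delta^{-1}$ and the commutation $\nabla^m\nabla\Delta^{-1}=\nabla\Delta^{-1}\nabla^m$ that keeps Calder\'on--Zygmund operators out of the picture. The one imprecision is your remark that the $\mathcal{R}_0$-remainder is ``lower order in $\lambda$'': after $k$ derivatives it also contributes at order $\lambda^{k-1}$ (the explicit $\lambda^{-1}$ merely offsets the $\lambda^k$ from differentiating $G_\lambda$), but since this matches the claimed bound \eqref{antidiv_bound}, nothing breaks.
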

Recall that we use the convention $\int\equiv \int_{\T^d}$ and that all norms and function spaces are considered on $\T^d$. The proof of the lemma follows from Lemmas~2.1-2.6 in \cite{MS}. We can now prove the above proposition, where we will write $f\equiv f_0$ for brevity.
\begin{proof}[Proof of Proposition~\ref{prop_the_step}.] Let $\delta >0$ and let $\chi_j \in C^\infty $ be such that $\chi_j =1$ on $\{ |f_j (x) |\geq \delta /2d \}$ and $\chi_j=0$ on $\{ |f_j (x) | \leq \delta /4n \}$. Let
\[
\begin{split}
\theta (x) &\coloneqq   \sum_{j=1}^n \chi_j (x) \mathrm{sgn} ( f_j (x) ) |f_j (x)|^{1/p'} \left( \Theta_\mu^j\right)_\lambda , \\
\theta_c  &\coloneqq  -\int \theta , \\
w (x) &\coloneqq  \sum_{j=1}^n \chi_j (x)  |f_j (x)|^{1/p} \left( W_\mu^j\right)_\lambda , \\
w_c (x) &\coloneqq  - \sum_{j=1}^n \mathcal{R} \left( \na \left( \chi_j (x) |f_j (x)|^{1/p} \right) \cdot \left( W_\mu^j\right)_\lambda \right),
\end{split}
\]
where, in the definition of $w_c$, we have used the facts that $\int W_\mu^j =0$ and 
\[
\int\na \left( \chi_j (x) |f_j (x)|^{1/p} \right) \cdot \left(  W_\mu^j \right)_\lambda =\int \div\left(\chi_j (x) |f_j (x)|^{1/p}\left(  W_\mu^j \right)_\lambda\right)=0
\]
(see \eqref{div_and_cancel_props}) to use the antidivergence operator $\mathcal{R}$ introduced in \eqref{antidiv_bound}. We set
\eqnb\label{def_of_u1_b1}
\begin{split}
u_1 &\coloneqq u_0 + \theta + \theta_c,\\
b_1 &\coloneqq b_0 + w + w_c.
\end{split}
\eqne
Note that $u_1,b_1\in C^\infty $ and $\div b_1=0$, $\int u_1=0$. Moreover, since $\div (\na u_0 + b_0 u_0 ) = -\div f$,
\begin{equation}\label{eq_for_u1b1}
\begin{split}
\div (\na u_1 + b_1 u_1 ) &= \div \left( \underbrace{\theta w - f}_{=: g_{main}} + \underbrace{\na \theta }_{=: g_{Lapl}}  + \underbrace{w u_0 + b_0 \theta}_{=: g_{lin}} + \underbrace{w_c u_0 + b_0 \theta_c + \theta_c w + \theta w_c  + \theta_c w_c}_{=: g_{corr}} \right) \\
&=: \div g.
\end{split}
\end{equation}
In what follows we will decompose $g_{main}$ into $g_{quad} +g_{\chi}$ with $\| g_{\chi }\|_1 \leq \delta /2$ and we will obtain the following estimates
\eqnb\label{ests_to_show}
\begin{split}
\| \theta \|_{H^1} &\leq    C\,\lambda^{1/2} \mu^{-\gamma } ,\\
| \theta_c | &\leq C\,\lambda^{-1} ,\\
\| w \|_p + \| w_c \|_p &\leq \frac{M}3 \| f \|_1^{1/p}  + C \left( \lambda^{-1/p} +\lambda^{-1}\right), \\
\| \theta  + \theta_c \|_{p'} &\leq \frac{M}3 \| f \|_1^{1/p'} + C\, \lambda^{-1/p'}, \\
\| g_{quad} \|_1 + \| g_{Lapl} \|_1 +\| g_{lin} \|_1 +\| g_{corr} \|_1  &\leq  C\left( \lambda^{-1} + \mu^{-(d-1)/p } + \mu^{-(d-1)/p'} \right),
\end{split}
\eqne
where $C=C(d,p,M,f,u_0,b_0,\delta )>0$. The claim of Proposition~\ref{prop_the_step} then follows by first taking $\delta >0$ sufficently small, then taking $\lambda$ sufficiently large, and finally taking $\mu >0$ sufficiently large.\\

In order to prove \eqref{ests_to_show}, we first note that
\eqnb\label{est_w_Lp}
\begin{split}
\| w  \|_{p} &\leq \sum_{j=1}^d \left\| \chi_j  |f_j |^{1/p} \left( W_\mu^j\right)_\lambda \right\|_{p} \\
&\leq   \sum_{j=1}^d \left( \left\| \chi_j   |f_j |^{1/p} \right\|_{p} \left\|  W_\mu^j \right\|_{p}  + C \lambda^{-1/p} \left\| \chi_j (x) |f_j |^{1/p} \right\|_{C^1}   \left\| W_\mu^j \right\|_{p}  \right) \\
& \leq   \frac{M}3 \| f \|_1^{1/p}  + C \lambda^{-1/p},
\end{split}
\eqne
where we used the improved H\"older inequality \eqref{improved_Holder} in the second inequality, as well as \eqref{mikado_bds} with $k=0$ and our choice of $\chi_j$ in the last inequality. In order to estimate $w_c$ we recall (from Lemma~\ref{3.2}) that $\int W_\mu^j =0$ and that
\[
\int\na \left( \chi_j (x) |f_j (x)|^{1/p} \right) \cdot W_\mu^j=\int \div\left(\chi_j (x) |f_j (x)|^{1/p}W_\mu^j\right)=0.
\]
Thus we can use \eqref{antidiv_bound} to obtain
\begin{equation}\label{est_wc_Lp}
\| w_c \|_p \leq  C\, \lambda^{-1} \sum_{j=1}^d \| W_\mu^j \|_p\leq C\, \lambda^{-1} .
\end{equation}
Analogously to \eqref{est_w_Lp} we obtain that
\begin{equation}\label{est_theta_Lp'}
\| \theta  \|_{p'} \leq   \frac{M}3 \| f \|_1^{1/p'}  +  C \, \lambda^{-1/p'}.
\end{equation}
As for $\theta_c$ we use the quantitative Riemann-Lebesgue \eqref{riem-leb_quantified} and \eqref{mikado_bds} to obtain
\begin{equation}\label{est_theta_c}
|\theta_c | \leq  \sum_{j=1}^n \sqrt{n} \lambda^{-1} \| f \|_{C^1} \|  \Theta_\mu^j \|_1 \leq C\, \lambda^{-1}.
\end{equation}
As for $\na \theta$, we have
\[
\na \theta =  \sum_{j=1}^n \left( \na \left( \chi_j  \,\mathrm{sgn} ( f_j  ) |f_j|^{1/p'} \right) \left( \Theta_\mu^j\right)_\lambda+\chi_j  \,\mathrm{sgn} ( f_j ) |f_j |^{1/p'} \lambda \left( \na \Theta_\mu^j\right)_\lambda \right),
\]
and so
\[
\| \na \theta \|_2 \leq C\, \sum_{j=1}^d \left( \| \Theta_\mu^j \|_2 + \lambda^{1/2}\| \na \Theta_\mu^j \|_2 \right) \leq C\,\lambda^{\frac12} \mu^{-\gamma },
\]
where we used the improved H\"older inequality \eqref{improved_Holder} in the first inequality and the last estimate in \eqref{mikado_bds} in the second inequality. A similar estimate for $\| \theta \|_2$ follows, and so
\begin{equation}\label{est_theta_H1}
\| \theta \|_{H^1} \leq C\, \lambda \mu^{-\gamma }.
\end{equation}
Note that
\[
g_{main} = \sum_{j=1}^n \chi_j f_j \left( \Theta_\mu^j W_\mu^j \right)_{\lambda } -f =  \sum_{j=1}^n  \chi_j f_j \left(  \Theta_\mu^j W_\mu^j  -e_j \right)_{\lambda } + \underbrace{ \sum_{j=1}^n (1-\chi_j ) f_j e_j }_{=: g_\chi },
\]
and so
\[
\div g_{main} = \sum_{j=1}^n \na \left(  \chi_j f_j \right) \cdot\left(  \Theta_\mu^j W_\mu^j  -e_j \right)_{\lambda } + \div g_{\chi} .
\]
Noting that \eqref{div_and_cancel_props} implies that both the mean of $\Theta_\mu^jW_\mu^j-e_j$ vanishes and that
\[
\int \na \left(  \chi_j f_j \right) \cdot  \left(  \Theta_\mu^j W_\mu^j  -e_j \right)_{\lambda } = \int \div \left(  \chi_j f_j  \left(  \Theta_\mu^j W_\mu^j  -e_j \right)_{\lambda } \right)  = 0,
\]
we can use the antidivergence operator introduced in Lemma~\ref{lem_fast_osc}.3 to define 
\[g_{quad }\coloneqq  \sum_{j=1}^n \mathcal{R} \left( \na \left(  \chi_j f_j \right) \cdot \left(  \Theta_\mu^j W_\mu^j  -e_j \right)_{\lambda } \right).
\]
We thus have
\[
\div g_{main} = \div g_{quad} + \div g_{\chi} ,
\]
and so we can replace $g_{main}$ by $g_{quad} + g_{\chi}$ (as then \eqref{eq_for_u1b1} remains valid). Using \eqref{antidiv_bound} with $k=0$ we obtain
\[
\| g_{quad} \|_{1} \leq C\,\lambda^{-1} \sum_{j=1}^n \| \chi_j f_j \|_{C^2} \| \Theta_\mu^j W_\mu^j - e_j \|_1  \leq  C\, \lambda^{-1},
\]
where we also used \eqref{mikado_bds} and our choice of $\chi_j$'s in the last inequality. Moreover $(1-\chi_j) f_j$ vanishes when $|f_j|\geq \delta /2d$, so that
\[
\| g_\chi \|_1 \leq \sum_{j=1}^n \| (1-\chi_j ) f_j \|_1 \leq  \sum_{j=1}^n  \int_{|f_j|< \delta/2d } |f_j | \leq \frac{\delta }2,
\]
as required. By \eqref{est_theta_H1} we have $\| g_{Lapl} \|_1 \leq C (\delta , \| f \|_{C^1} ) M \lambda \mu^{-\gamma }$. As for $g_{lin}$ we can use the first two estimates in \eqref{mikado_bds} with $k\coloneqq  0$, $r\coloneqq  1$ to obtain
\[
\| g_{lin} \|_1 \leq \| w \|_1 \| u_0 \|_{\infty } + \| b_0 \|_{\infty } \| \theta \|_{1} \leq C\, ( \mu^{-(d-1)/p} + \mu^{-(d-1)/p'} ).
\]
Finally,
\[
\| g_{corr} \|_1 \leq  \| w_c \|_p \| u_0 \|_{p'} + \| \theta \|_{p'} \| w_c \|_p + |\theta_c | ( \| b_0 \|_1 + \| w \|_1 + \| w_c \|_1 )\leq  C  \left( \lambda^{-1} + \mu^{-(d-1)/p'}\right) ,
\]
where we used \eqref{est_wc_Lp}, \eqref{est_theta_c}, \eqref{est_theta_Lp'} and the second estimate in \eqref{mikado_bds} with $k\coloneqq   0$, $r\coloneqq  1$ in the last inequality. Thus we have obtained all inequalities claimed in \eqref{ests_to_show}.
\end{proof}

In order to prove Corollary~\ref{cor_main3} we revisit the proof above and note that the restriction on the range of $p$ originates from the last estimate of \eqref{mikado_bds}, that is from the requirement that $u\in  H^1$.  Dropping this requirement we may instead require that $u\in W^{1,r}$ or impose additional regularity on $b$, as much as the first two estimates of \eqref{mikado_bds} allow. To be more precise, the first claim of Corollary~\ref{cor_main3} follows from a version of Proposition~\ref{prop_the_step} in which the iterative estimates \eqref{raz}--\eqref{dwa} are replaced by 
\begin{eqnarray}
\| b_1 - b_0 \|_{p } + \| u_1 - u_0 \|_{{p'} } &\leq &M \max \{ \| f_0 \|_1^{ 1/p' }, \| f_0 \|_1^{1/p} \} \label{raz1},\\
\| u_1-u_0 \|_{W^{1,r}} +  \| f_1 \|_{1} &\leq &\varepsilon \label{dwa1},
\end{eqnarray}
and with the $H^1$ norm replaced by the $W^{1,r}$ norm in \eqref{choice_vareps} and in the proof of $u\ne 0$ following it.

In order to see \eqref{raz1}--\eqref{dwa1} one needs to replace the $\| \na \theta \|_2$ estimate \eqref{est_theta_H1} by 
\[
\| \nabla \theta \|_r \leq C_r\sum_{j=1}^d \left( \| \Theta_\mu^j \|_r + \lambda^{1-\frac{1}{r}}\| \na \Theta_\mu^j \|_r \right) \leq C_r\lambda^{1-\frac{1}{r}} \mu^{1+(d-1)\left( \frac{1}{p'} - \frac{1}{r} \right)},
\]
using \eqref{improved_Holder} and \eqref{mikado_bds}. This can be made small, as before, as the exponent of $\mu$ is negative. We note that this estimate needs to be used in the estimate for $\| g_{Lapl}\|_1 $. This guarantees \eqref{dwa1}, while \eqref{raz1} follows in the same way as before.

Analogously, one can obtain the second claim of Corollary~\ref{cor_main3} by showing that, except for \eqref{dwa1} we also have
\eqnb\label{dwa2}
\|b_1 - b_0 \|_{W^{1,q}} \leq \varepsilon.
\eqne
Indeed, using \eqref{improved_Holder},  \eqref{antidiv_bound} and \eqref{mikado_bds},
\[
\begin{split}\| \na w \|_q &\leq C_q \sum_{j=1}^d \left( \| W_\mu^j \|_q + \lambda^{1-\frac1{q} } \| \na W_\mu^j \|_q  \right)  \leq C_q \lambda^{1-\frac1{q} } \mu^{1+(d-1) \left(\frac{1}{p} - \frac{1}{q}\right) }\\
\text{ and }\,\,\| \na w_c \|_q &\leq C_q \sum_{j=1}^d \| W_{\mu}^j \|_q \leq C_q \mu^{(d-1)\left(\frac{1}{p}-\frac{1}{q} \right)},
\end{split} 
\]
which can be both made smaller than $\varepsilon$ as above.\vspace{0.5cm}\\

\noindent
\textbf{Acknowledgements.} We are grateful to an anonymous referee for many detailed and insightful comments. T.C. is grateful to A. S. Mikhailov from the St. Petersburg branch of Steklov Institute for introducing him to the problem and interesting discussions. We are grateful to T. Komorowski from IMPAN for pointing out the potential role of Sobolev regularity of $b$, and to M. Ma\l{}ogrosz for helpful discussions.

 T.C. was supported by the National Science Centre (NCN) grant SONATA BIS 7 UMO-2017/26/E/ST1/00989. W.S.O. was supported in part by the Simons Foundation.\\

\noindent
\textbf{Data availability statement.} 
There is no data associated with the project.

\appendix
\section*{Appendix. Proof of the maximum principle \eqref{max_princ}}\label{App}

Here we show that, if $f\in  L^\infty $ and $b\in L^2$ satisfies the divergence-free condition \eqref{divergence}, then any solution $u\in L^\infty\cap  \dot{H}^1$ to $-\div (\nabla u + bu ) =f$ (in the sense of  \eqref{zhikov_weak_intro}) satisfies
\eqnb\label{max_princ_toshow}
\| u \|_{\infty } \leq C_n \| f \|_\infty,
\eqne
where $C_n>0$ depends only on the dimension $d$. Note that, since the constant does not depend on $b$, this proves \eqref{max_princ_toshow} for all $u\in \dot H^1$ (i.e. we can obtain $u\in L^\infty $, rather than assume it), which is \eqref{max_princ}, as desired. Indeed, one can approximate any $b\in L^{2d/(d+2)}$ in the $L^{2d/(d+2)}$ norm by a $C^\infty$ function and also approximate $f$ in $H^{-1}$ by a smooth function, which gives a unique and smooth solution the approximate system. The limiting procedure described below \eqref{b_approximate} then gives an approximation solution $u$ with \eqref{max_princ_toshow}. Since for $b\in L^2$ the uniqueness of solutions to \eqref{zhikov_weak_intro} holds (recall \eqref{b_l2}), we obtain \eqref{max_princ}.

We first recall the inequality
\eqnb\label{gn_to_use}
\| g \|_2^2 \leq \varepsilon \| \nabla g \|^2_2 + C_n \varepsilon^{-\frac{n}2} \| g \|_1^2,
\eqne
valid for every $g\in H^1 (\T^d)$, $\varepsilon \in (0,1)$.  Indeed the Gagliardo-Nirenberg-Sobolev inequality gives that
\[
\| g \|^2_2 \leq C \| \nabla g \|_2^{\frac{2d}{d+2}} \| g \|_1^{\frac4{d+2}} \leq \varepsilon \| \nabla g \|_2^2 + C_d \varepsilon^{-\frac{d}2} \| g \|_1^2
\]
if $\int g =0$, where we used Young's inequality in the second step. Thus the claim follows for such $g$. If $\int g \ne 0$ then
\[
\begin{split}
\| g \|_2^2 &\leq C \left\| g- \int g \right\|_2^2 + C\left| \int g \right|^2 \\
&\leq \varepsilon \| \nabla g \|_2^2 + C_n \varepsilon^{-\frac{n}2} \left\| g - \int g\right\|_1^2 + C\| g \|_1^2 \\
&\leq \varepsilon \| \nabla g \|_2^2 + C_n \varepsilon^{-\frac{n}2} \left\| g \right\|_1^2 ,
\end{split}
\]
where we used the triangle inequality and the assumption that $\varepsilon <1$ in the last step. \\

We can now prove \eqref{max_princ_toshow}. Applying \eqref{gn_to_use} with $g\coloneqq  {u^{2^{k-1}}}$ gives
\eqnb\label{temp1}
\int u^{2^k} \leq \varepsilon \int \left| \nabla u^{2^{k-1}} \right|^2 + C\varepsilon^{-\frac{n}2}  \left( \int u^{2^{k-1}}\right)^2,
\eqne
where $C>1$ is a generic constant, which depends on $d$ only. The value of $C$ may change from line to line in the following calculation.

Note that, since we assume $b\in L^2 $, we can test \eqref{zhikov_weak_intro} with $u^{2^k-1}$. Indeed, one can, for example, take $\phi \coloneqq  (u^{2^k-1})_\varepsilon$ (the mollification of $u$, recall \eqref{mollification}) and take the limit $\varepsilon\to 0$. We obtain
\eqnb\label{to_use_later}
\frac{2^k -1}{2^{2k-2}}  \int \left| \nabla u^{2^{k-1}} \right|^2 = \int \nabla u \cdot \nabla \left( u^{2^k-1} \right) =  \int f u^{2^k-1} \leq \frac12 \int f^{2^k} + \frac12 \int u^{2^k},
\eqne
where we used the fact that $\int bu \cdot \nabla \left( u^{2^k-1} \right) =(1-2^{-k}) \int b \cdot \nabla \left( u^{2^k} \right) =0$, due to the divergence-free assumption on $b$ and the assumed regularity $u\in L^\infty \cap H^1$, $b\in L^2$. Applying this inequality in \eqref{temp1} gives
\[
\int u^{2^k} \leq \varepsilon \frac{2^{2k-3}}{2^k-1} \int f^{2^k}  + \varepsilon \frac{2^{2k-3}}{2^k-1} \int u^{2^k} +  C \varepsilon^{-\frac{n}2} \left( \int u^{2^{k-1}} \right)^2.
\]
Taking $\varepsilon \coloneqq   2^{-k}$ and noting that
\[
2^{-k} \frac{2^{2k-3}}{2^k-1} \leq \frac14 \qquad \text{ for } k\geq 1,
\]
we obtain
\[
\int u^{2^k} \leq \frac14 \int f^{2^k}  + \frac14   \int u^{2^k} +  C 2^{\frac{kn}2} \left( \int u^{2^{k-1}} \right)^2 ,
\]
and so
\[
\int u^{2^k} \leq \int f^{2^k}  +  C 2^{\frac{kn}2} \left( \int u^{2^{k-1}} \right)^2  \leq   \| f\|_{\infty }^{2^k}  +  C 2^{\frac{kn}2} \| u \|_{2^{k-1}}^{2^k}
\]
for $k\geq 1$. Taking both sides to power $2^{-k}$ gives
\[
\| u \|_{2^k} \leq  \left(  \| f \|_{\infty }^{2^k} +  C 2^{\frac{kn}2}  \| u \|_{2^{k-1}}^{2^k}  \right)^{2^{-k}}
\]
for $k\geq 1$. Noting that the above inequality holds trivially when the left-hand side is replaced by $\| f \|_\infty$, we set
\[
m_k \coloneqq   \max \left\lbrace \| f \|_\infty , \| u\|_{2^k}   \right\rbrace
\]
and obtain that
\[
m_k \leq\left( 1+C 2^{\frac{kn}2} \right)^{2^{-k}}  m_{k-1} \leq \left( C 2^{\frac{kn}2} \right)^{2^{-k}}  m_{k-1}
\]
for $k\geq 1$. (Recall that the value of $C>1$ may change from line to line.) Since
\[
\prod_{k=1}^\infty \left( C 2^{\frac{kn}2} \right)^{2^{-k}} = \exp \left( \log \left( \prod_{k=1}^\infty \left( C 2^{\frac{kn}2} \right)^{2^{-k}} \right) \right) = \exp \left( \log C\sum_{k=1}^\infty 2^{-k} + \log \left( 2^{\frac{n}2} \right) \sum_{k=1}^\infty k2^{-k}  \right) \leq C,
\]
we see that
\[
m_k \leq C m_0\qquad \text{ for } k\geq 1.
\]
In particular $\| u \|_{2^k } \leq C m_0$ for $k\geq 1$, which implies that $u \in L^\infty$ with
\eqnb\label{almost_there}
\| u \|_\infty \leq C m_0.
\eqne
On the other hand the Poincar\'e inequality and \eqref{to_use_later} applied with $k=1$ gives
\[
\| u \|_2^2 \leq C \| \nabla u \|_2^2 =C  \int fu  \leq \frac12 \| u \|_2^2 + C \| f \|_\infty^2,
\]
which, after absorbing the first term on the right-hand side, implies that $m_0 \leq C \| f \|_\infty$. Applying this in \eqref{almost_there} gives \eqref{max_princ_toshow}, as required.

\end{document}